\def\R{{\mathbb R}}
\def\N{{\mathbb N}}
\def\Z{{\mathbb Z}}
\def\1{{1\!\!\!1}}
\def\P{{\mathbb P}}
\def\cal{\mathcal}
\def\ol{\overline}
\def\dist{{\rm{dist}}}
\def\eps{\varepsilon}
\def\a{{\alpha}}
\newcommand{\be}{\begin{equation}}
\newcommand{\ee}{\end{equation}}
\numberwithin{equation}{section}
\newtheorem{theorem}{Theorem}
\newtheorem{prop}{Proposition}[section]
\newtheorem{defi}{Definition}[section]
\newtheorem{lemma}{Lemma}[section]
\title[The $t$-Martin Boundary ]{The  $t$-Martin  boundary of reflected random walks on a half-space}
\author{Irina Ignatiouk-Robert}
\address{
{Universit\'e de Cergy-Pontoise,}
{D\'epartement de math\'ematiques,}
{2, Avenue Adolphe Chauvin,}
{95302 Cergy-Pontoise Cedex,}
{France}}
\date{\today}
\email{Irina.Ignatiouk@math.u-cergy.fr}
\keywords{$t$-Martin boundary. Markov chain. Stability}
\subjclass{60J10, 31C35, 60J45, 60J50}
\begin{document}
\begin{abstract} The $t$-Martin boundary of a random walk on a half-space with reflected
  boundary conditions is identified. It is shown in particular that  the   $t$-Martin boundary of such a
  random walk is not stable in the following sense : for different values of $t$,   the
  $t$-Martin compactifications are not homeomorphic to each other.   
\end{abstract}
\maketitle

\section{Introduction}
Before  formulating our results we recall  the definition and the properties of $t$-Martin compactification. 

Let $P=(p(x,x'), \; x,x'\in E)$ be a transition kernel of a time-homogeneous, irreducible Markov
chains $Z=(Z(t))$  on a countable, discrete state spaces $E$. Then by irreducibility, for any $t>0$, the series 
\be\label{e1-2}
G_t(z,z') ~\dot=~ \sum_{n=0}^\infty t^{-n} \P_x(Z(n) ~=~ z')
\ee
either converge or diverge simultaneously for all $z,z'\in E$ (see~Seneta~\cite{Seneta}).
\begin{defi} 
The infimum $\rho(P)$ of the $t>0$ for which the series \eqref{e1-2} converge is equal to 
\be\label{e1-1}
\rho(P) ~=~ \limsup_{n\to\infty} \left(\P_x(Z(n)=x')\right)^{1/n},
\ee 
it is called the convergence norm of the transition kernel $P$. 
\begin{enumerate}
\item For $t > 0$, a positive function $f : E\to\R_+$ is
said to be $t$-harmonic (resp. $t$-superharmonic) for $P$ if it satisfies the equality
$Pf = tf$ (resp.  $Pf \leq tf$). A $t$-harmonic function is therefore an  
eigenvectors of the transition operator $P$ with respect to the eigenvalue $t$.   For
$t=1$, the $t$-harmonic functions are called harmonic.   
\item A $t$-harmonic function $f>0$ is said to be minimal if for any  $t$-harmonic
  function $\tilde{f}>0$ the inequality $\tilde{f}\leq f$ implies the equality $\tilde{f}
  = c f$ with some $c>0$. 
\end{enumerate}
\end{defi}
For $t > 0$, the set of $t$-superharmonic functions of an irreducible Markov
kernel $P$ on a countable state space $E$ is nonvoid only if $t\geq\rho(P)$,
see~Pruitt~\cite{Pruitt} or Seneta~\cite{Seneta}. 

\begin{defi} 
 The $t$-Martin kernel $K_t(x,x')$ of the transition kernel $P$ is defined by 
\be\label{e1-4}
K_t(x,x') ~=~ G_t(x,x_n)/G_t(x_0,x_n)
\ee
where $x_0$ is a reference point  in $E$.

A sequence of points $x_n\in E$ is said to converge to a point of the $t$-Martin
  boundary $\partial_{t,M}(E)$ of the set $E$ defined by the transition kernel $P$ if  for any finite subset
  $V\subset E$ there is $n_V$ such that $x_n\not\in V$ for all $n > n_V$ and the sequence of functions
  $K_t(\cdot, x_n)$ converges point-wise on 
  $E$. 
\end{defi}
The $t$-Martin  compactification $E_{t,M}$ is therefore the unique  smallest compactification of
the  set $E$ for  which the  $t$-Martin kernels $K_t(z,\cdot)$  extend continuously.
\begin{defi} The $t$-Martin compactification is said to be stable if it does
  not depend on $t$ for $t>\rho(P)$, i.e. if for any sequence of points $x_n\in E$ that leaves the finite
subsets of $E$, the convergence to a point of the $t$-Martin boundary for some $t >
\rho(P)$ implies the convergence to a point of the $t$-Martin boundary for all $t >
\rho(P)$.  
\end{defi}
In the case $t=1$ and with a transient transition kernel $P$, the $t$-Martin compactification is the
classical Martin compactification, introduced first for Brownian motion by
Martin~\cite{Martin}. For countable Markov chains with discrete time, the abstract construction
of the Martin compactification was given by Doob~\cite{Doob} and Hunt~\cite{Hunt}. The
main general results in this domain are the following~:

The minimal
Martin boundary $\partial_{1,m}(E)$ is the set of all those
$\gamma\in\partial_{1,M}(E)$ for which the function $K_1(\cdot, \gamma)$ is 
minimal harmonic. By the Poisson-Martin
representation  theorem, for  every  non-negative  $1$-harmonic function  $h$  there exists  a
unique positive Borel measure $\nu$ on $\partial_{1,m}(E)$ such that
\[
h(z) = \int_{\partial_{t,m} E_M} K_1(z,\eta) \, d\nu(\eta) 
\]
By Convergence theorem, the sequence
$(Z(n))$ converges $\P_z$ almost surely for every initial state $z\in E$ to a $\partial_{1,m}(E)$ valued
random variable. The Martin boundary provides therefore all non-negative $1$-harmonic functions and 
describes the asymptotic behavior of the transient Markov chain $(Z(n))$. See
Woess~\cite{Woess}).

In general it is a non-trivial problem  to determine Martin boundary of a given class of
Markov chains. The $t$-Martin boundary plays an important role to determine the Martin
boundary of several products of transition kernels.

\begin{enumerate}
\item To  identify the Martin boundary of the direct  product of two independent
transient   Markov  chains   $(X(n))$  and   $(Y(n))$,  i.e.    the  Martin   boundary  of
$Z(n)=(X(n),Y(n))$, the  determination of  the Martin boundary  of each of  the components
$(X(n))$ and $(Y(n))$ is far from being sufficient.  Molchanov~\cite{Molchanov} has shown that
for  strongly aperiodic  irreducible Markov  chains $(X(n))$  and $(Y(n))$,  every minimal
harmonic function $h$ of the couple  $Z(n)=(X(n),Y(n))$ is of the form $h(x,y) = f(x)g(y)$
where $f$  is a $t$-harmonic function  of $(X(n))$ and  $g$ is a $s$-harmonic  function of
$(Y(n))$ with  some $t>0$ and  $s>0$ satisfying the  equality $t s =  1$.  

\item In the  case of Cartesian product  of Markov  chains, i.e.  by  considering a convex
  combination $Q=aP+(1-a)P'$, $0<a<1$,     of    the     corresponding    transition
  matrices,     Picardello    and Woess~\cite{Picardello-Woess} has shown  that the
  minimal harmonic functions of the transition matrix $Q$ have a similar product form but with $t>0$ and
  $s>0$ satisfying the equality $at + (1-a)s = 1$. In this paper some of the results on
  the topology of the Martin boundary are
  obtained under the assumption that the   $t$-Martin  boundaries of  the components
  $(X(n))$ and  $(Y(n))$ are  stable in the   above sense. 
\end{enumerate}
This stability property is  an important ingredient for the identification of the
Martin boundary of the product of Markov chains in general. The assumption on stability seems to be non-restrictive in the
case of  (spatially) homogeneous Markov processes, see  Woess~\cite{Woess}, Picardello and
Woess\cite{Picardello-Woess:2}).    These   previous   works  suggest   in
particular the natural conjecture that the $t$-Martin compactification should be stable in
general.  The  purpose of this  paper is to  show that this is  not true.  The
$t$-Martin compactification  of a  random walk on  a half-space $\Z^{d-1}\times\N$  with a
reflected boundary conditions on  the hyper-plane $\Z^{d-1}\times\{0\}$ is identified.   Our  results  show  in
particular that  the $t$-Martin compactification for such a random walk is not stable.

\section{Main results} We consider a random walk $Z(n) = (X(n),Y(n))$ on $\Z^{d-1}\times\N$ with transition probabilities 
\[
p(z,z') ~=~ \begin{cases} \mu(z'-z) &\text{for ~$z=(x,y),z'\in\Z^{d-1}\times\N$ with $y>0$,}\\
\mu_0(z'-z) &\text{for $z=(x,y),z'\in\Z^{d-1}\times\N$ with $y=0$} 
\end{cases}
\]
where $\mu$ and $\mu_0$ are two different positive measures on $\Z^d$ with $0 < \mu(\Z^d)\leq
1$ and $0 < \mu_0(\Z^d) \leq 1$. The random walk $Z(n) = (X(n),Y(n))$ can be therefore
substochastic if either $\mu(\Z^d) <  1$ or $\mu_0(\Z^d) < 1$.

Throughout this
paper we denote by $\N$ the set of all non-negative integers~: $\N=\{0,1,2,\ldots\}$ and
we let $\N^*=\N\setminus\{0\}$. The assumptions we need on the Markov process
$(Z(t))$ are the following. 

\medskip
\noindent 
\begin{enumerate}
\item[(H0)] {\em $\mu(z)=0$ for $z=(x,y)\in\Z^{d-1}\times\Z$ with $y<-1$ and $\mu_0(z) = 0$ for
$z=(x,y)\in\Z^{d-1}\times\Z$ with $y< 0$.} 
\item[(H1)] {\em The Markov process $Z(t)$ is irreducible on
  $\Z^{d-1}\times\N$.} 
\item[(H2)] {\em The homogeneous random walk $S(t)$ on $\Z^d$ 
having transition probabilities  
$p_S(z,z')=\mu(z'-z)$ is irreducible on $\Z^d$}. 
\item[(H3)]{\em  The jump generating functions 
\be\label{e1-6}
\varphi(a) ~=~ \sum_{z\in\Z^d} \mu(z) e^{a\cdot z} \quad \text{ and } \quad \varphi_0(a) ~=~
\sum_{z\in\Z^d} \mu_0(z) e^{a\cdot z} 
\ee
are finite everywhere on $\R^d$.} 
\item[(H4)]{\em The last coordinate of $S(t)$ is an aperiodic random walk on $\Z$ .}
\end{enumerate}

Our first preliminary result identifies the convergence rate $\rho(P)$ of the transition
kernel $P=(p(z,z'), \; z,z'\in\Z^{d-1}\times\N)$. 
\begin{prop}\label{pr1} Under the hypotheses (H0)-(H3), 
\be\label{e1-7}
\rho(P) ~=~ \inf_{a\in\R^d} \max\{\varphi(a),\varphi_0(a)\}.  
\ee
\end{prop}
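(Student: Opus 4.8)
The plan is to establish the two inequalities $\rho(P)\le\inf_a\max\{\varphi(a),\varphi_0(a)\}$ and $\rho(P)\ge\inf_a\max\{\varphi(a),\varphi_0(a)\}$ separately. Write $\rho_* = \inf_{a\in\R^d}\max\{\varphi(a),\varphi_0(a)\}$. For the upper bound $\rho(P)\le\rho_*$, I would exhibit, for every $a$ achieving (or nearly achieving) the infimum, a positive $t$-superharmonic function for $P$ with $t=\max\{\varphi(a),\varphi_0(a)\}$; by the Pruitt--Seneta criterion cited just above the proposition (the set of $t$-superharmonic functions is nonvoid only if $t\ge\rho(P)$), this forces $\rho(P)\le\max\{\varphi(a),\varphi_0(a)\}$, and taking the infimum over $a$ gives $\rho(P)\le\rho_*$. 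The natural candidate is an exponential $h_a(x,y)=e^{a\cdot(x,y)}$, possibly corrected near the boundary: in the interior ($y>0$) one computes $Ph_a(z)=\varphi(a)h_a(z)$, and on the boundary ($y=0$) one gets $\varphi_0(a)h_a(z)$; so $h_a$ is $t$-superharmonic with $t=\max\{\varphi(a),\varphi_0(a)\}$ directly, or one adjusts the value of $h_a$ on the boundary layer to absorb the discrepancy while keeping positivity, using (H0) so that the boundary only interacts with the slice $y\in\{0,1\}$.

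For the lower bound $\rho(P)\ge\rho_*$, I would compare $P$ from below with the homogeneous walk $S$ on $\Z^d$. Since $S$ dominates the reflected walk in the sense that interior transitions of $P$ coincide with those of $S$, an excursion of $Z$ that stays in $\{y\ge 1\}$ has the same law as the corresponding piece of $S$; hence $\P_x(Z(n)=x', Z \text{ stays in } y\ge1) \le \P_x(S(n)=x')$, and the latter has exponential rate governed by $\varphi$. To pick up the $\varphi_0$ contribution one restricts instead to paths confined to the boundary hyperplane's influence. Concretely, I would fix a reference point, say on the boundary, and bound $\P(Z(2n)=x_0)$ from below by the probability of a path that alternates: go from the boundary, return to it, etc.; splicing $n$ such sub-excursions and using irreducibility (H1)–(H2) together with the aperiodicity hypothesis (H4) to guarantee the relevant return probabilities are positive, one obtains a lower bound of the form $c^n \big(\sup$ of return-rate$\big)^n$, whose $n$-th root is controlled from below by, after optimizing, the local behavior of both $\varphi$ and $\varphi_0$. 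A cleaner route: use the variational/large-deviations characterization — the rate $\rho(P)$ is $\inf$ over drift directions $v$ of a rate function that, for a reflected walk, is the infimum-convolution of the interior rate (dual to $\log\varphi$) and the boundary rate (dual to $\log\varphi_0$); since both interior and boundary pieces are available along a path that travels arbitrarily far, the combined rate cannot be smaller than $\rho_*$.

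The main obstacle is the lower bound, specifically showing that the boundary generating function $\varphi_0$ genuinely enters: one must produce paths that spend a positive fraction of time interacting with the boundary and estimate their probability sharply enough, which requires a subadditivity/concatenation argument (to get existence of the limit defining $\rho(P)$, already guaranteed by Seneta) combined with a careful choice of the optimizing exponent $a$ and a local central-limit-type or at least a ``not-too-small'' estimate on a single boundary excursion — this is where (H3) (finiteness of $\varphi,\varphi_0$ everywhere, giving exponential tilting with no domain restrictions) and (H4) (aperiodicity of the last coordinate, ensuring the tilted walk can return to the boundary at the right times) are used. Once a single-excursion estimate of the form $\P(\text{excursion realizing local slope near } \nabla\!\max\{\varphi,\varphi_0\}) \ge e^{-o(n)}\rho_*^{-(\text{length})}$ is in hand, concatenating gives $\rho(P)\ge\rho_*$ and, combined with the upper bound, completes the proof.
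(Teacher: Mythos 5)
Your upper bound is exactly the paper's argument: the exponential $e^{a\cdot z}$ is $t$-superharmonic with $t=\max\{\varphi(a),\varphi_0(a)\}$ (no boundary correction is needed, since (H0) keeps the walk in the half-space and the boundary kernel contributes exactly $\varphi_0(a)e^{a\cdot z}$), and the Pruitt--Seneta characterization then gives $\rho(P)\leq\inf_a\max\{\varphi(a),\varphi_0(a)\}$. That half is fine.

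The lower bound, however, is where your proposal has a genuine gap: everything is reduced to a ``single-excursion estimate'' of the form $\P(\text{excursion})\geq e^{-o(n)}\rho_*^{\text{length}}$ (note also that your exponent $\rho_*^{-(\text{length})}$ has the wrong sign, since $\rho_*\leq 1$), and that estimate is precisely the technical heart of the proposition; it is asserted, not proved. Establishing it amounts to a \emph{local} large-deviation lower bound for the reflected walk pinned at a lattice point, which neither subadditivity nor the scaled-path LDP gives for free: the sample-path LDP controls events in Skorohod space at scale $\eps$, not return probabilities to a single site, and the half-space constraint means the relevant tilted walk must be kept near the boundary, where the effective cost is $(\log\max\{\varphi,\varphi_0\})^*$ rather than any inf-convolution of the two dual rates (your ``cleaner route'' misstates the boundary rate and also never says how a statement about path rates converts into a statement about $\rho(P)$). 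The paper closes exactly this gap by quoting two nontrivial external results: Proposition~\ref{pr4-1}, the sample-path large deviation principle for $Z^\eps(t)=\eps Z([t/\eps])$ with local rate $(\log\varphi)^*$ in the interior and $(\log\max\{\varphi,\varphi_0\})^*$ on the boundary (from Dupuis--Ellis--Weiss and the author's earlier papers), and Theorem~1 of \cite{Ignatiouk:05}, which states $\log\rho(P)=-\frac1T I_{[0,T]}(\ol{0})$ once conditions $(a_1)$--$(a_3)$ (integral form of the rate function plus two-sided convex bounds on the local rate, verified using (H2)--(H3)) hold; the lower bound then follows from the computation $L(0,0)=(\log\max\{\varphi,\varphi_0\})^*(0)=-\log\inf_a\max\{\varphi(a),\varphi_0(a)\}$. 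Without either that identification theorem or a worked-out pinned lower bound (tilting plus a local limit theorem for the tilted walk respecting the boundary, where (H4) would indeed enter), your argument does not yet prove $\rho(P)\geq\inf_a\max\{\varphi(a),\varphi_0(a)\}$.
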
 
This is a consequence of the large deviation  principle for sample
  paths of the scaled processes $Z^\eps(t) ~\dot=~ \eps 
Z(t/\eps)$ obtained in \cite{D-E-W,D-E,Ignatiouk:02,Ignatiouk:04} (for the related
results see also \cite{D-B,
  D-E-2,I-M-S,S-W}). 
The proof of this proposition is given in Section~\ref{proof-of-prop1}.

\medskip

Remark that under the assumptions (H0)-(H3), for any $t>0$, the sets 
\be\label{e1-8}
D^t ~\dot=~ \{a\in \R^d :
\varphi(a) \leq t\} \quad \text{and} \quad D^t_0 ~\dot=~ \{a\in \R^d :
\varphi_0(a) \leq t\} 
\ee 
are convex and the set $D^t$ is moreover compact. 
We denote by $\partial D^t$ the boundary of $D^t$ we let 
\[
\partial_0 D^t ~\dot=~ \{a\in\partial D^ :~ \nabla\varphi(a) \in
\R^{d-1}\times\{0\}\},
\] 
\[
\partial_+ D^t ~\dot=~ \{a\in\partial D^t :~ \nabla\varphi(a) \in
\R^{d-1}\times[0,+\infty[\}
\] 
and  
\[
\partial_- D^t ~\dot=~ \{a\in\partial D^t :~ \nabla\varphi(a) \in
\R^{d-1}\times ]-\infty,0]\}. 
\]
 For   
$a\in D^t$,   the unique
point on the boundary $\partial_- D^t$  which has the same first $(d-1)$ coordinates as the
point $a$ is denoted by $\ol{a}^t$,  
\be\label{e1-9}
\hat{D}^t ~\dot=~ \{a\in D^t : \varphi_0(\ol{a}^t)\leq t\} \quad \text{ and } \quad
\Gamma_+^t ~\dot=~ \partial_+D^t \cap\hat{D}^t.  
\ee
 Remark that $\partial_0 D^t = \partial_+ D^t\cap \partial_-D^t$ and for $a\in\partial_+
D^t$,  one has $a=\ol{a}^t$ if and only if $a\in\partial_0 D^t$. 
Moreover, under the hypotheses~(H0)-(H1), for any $a\in D^t$,
\[
\varphi_0(\ol{a}^t) \leq \varphi_0(a) 
\]
because the function $a\to\varphi_0(a)$ is increasing with respect to the last coordinate
of $a\in\R^d$. This inequality implies  another useful
representation of the set $\hat{D}^t$ : 
\[\text{ $a=(\a,\beta)\in\hat{D}^t$ if
and only if $a\in D^t$ and $a'=(\a,\beta')\in D^t\cap D_0^t$ for some  $\beta' \in\R$ } 
\]
or equivalently,
\be\label{e1-10}
\hat{D}^t ~=~ (\Theta^t\times\R) \cap D^t 
\ee
where 
\be\label{e1-11}
\Theta^t ~\dot=~ \{ \a \in\R^{d-1} :~ \inf_{\beta\in\R} \max\{\varphi(\a,\beta),
\varphi_0(\a,\beta)\} \leq t\}.
\ee 
The set $\Theta^t\times\{0\}$ is therefore the orthogonal projection of the set $D^t\cap D_0^t$ onto the hyper-plane
$\R^{d-1}\times\{0\}$ and by Proposition~\ref{pr1},
\be\label{e1-12}
\rho(P) ~=~ \inf\{t > 0 :~ D^t\cap D^t_0 \not=\emptyset\} ~=~ \inf\{ t > 0 : \Theta^t
\not=\emptyset\}. 
\ee
For $t > \rho(P)$ and 
$a\in \hat{D}^t$, we denote by $V_t(a)$ the normal cone to the set 
$\hat{D}^t$ at the point $a$ and for $a\in\Gamma_+^t ~\dot=~ \hat{D}^t\cap\partial_+D^t=
(\Theta^t\times\R) \cap \partial_+D^t$ we define the function 
$h_{a,t}$ on $\Z^{d-1}\times\N$ by letting 
\be\label{e1-13}
h_{a,t}(z) ~=~ \begin{cases} \displaystyle{\exp(a\cdot z)  -
  ~\frac{t - \varphi_0(a)}{t -
  \varphi_0(\ol{a}^t)} \, \exp(\ol{a}^t\cdot z})  &\text{ \!if  
 $\; a\not\in\partial_0 D^t \; $ and  $\; \varphi_0(\ol{a}^t) < t$,}\\ 
\\
\displaystyle{y \exp(a\cdot z) +  
 ~\frac{ \frac{\partial}{\partial\beta}\varphi_0(a)}{(t -
  \varphi_0(a))} \!\exp(a\cdot z)} &\text{ \!if  $a=\ol{a}^t\in\partial_0 D^t$ and $\varphi_0(a) < t$,}\\
\\
\exp(\ol{a}^t\cdot z) &\text{ \!if $\; \varphi_0(\ol{a}^t) = t\; $}
\end{cases}
\ee
 where $\frac{\partial}{\partial\beta}\varphi_0(a)$ denotes the partial derivative of the
function $a\to \varphi_0(a)$ with respect to the last coordinate $\beta\in\R$ of $a=(\a,\beta)$.   

The following lemma gives an explicit representation of the normal cone $V_t(a)$. 
\begin{lemma}\label{lem2} Under the hypotheses (H0)-(H3), for any $t>\rho(P)$ and $a\in \Gamma_+^t$, 
\be\label{e1-14}
V_t(a) ~=~ \begin{cases} \bigl\{c\nabla\varphi(a) : c\geq 0\bigr\} &\text{if
    either 
    $\varphi_0(\ol{a}^t) < t$} \\
&\text{or $a=\ol{a}^t\in\partial_0 D^t$,}\\
\\
\bigl\{c_1\nabla\varphi(a) + c_2(\nabla\varphi_0(\ol{a}^t) + \kappa_{a}
\nabla\varphi(\ol{a}^t)) :~ c_i\geq 0\bigr\} &\text{if
    $\varphi_0(\ol{a}^t) = t$ }\\ &\text{and $a\not\in\partial_0 D^t$}\\ 
\end{cases}
\ee
where 
\[
\kappa_{a} = - \left.\frac{\partial\varphi_0(\a,\beta)}{\partial\beta}
\left(\frac{\partial\varphi(\a,\beta)}{\partial\beta}\right)^{-1}\right|_{(\a,\beta) = \ol{a}^t} 
\]
\end{lemma}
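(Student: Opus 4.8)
The plan is to compute the normal cone $V_t(a)$ directly from the description of $\hat D^t$ as an intersection of convex sets and to identify at each point $a\in\Gamma_+^t$ which of the defining constraints are active. Recall from \eqref{e1-9}–\eqref{e1-10} that $\hat D^t=D^t\cap(\Theta^t\times\R)=\{a:\varphi(a)\le t,\ \varphi_0(\ol a^t)\le t\}$, so near a boundary point the set is carved out by the inequality $\varphi(a)\le t$ together with the inequality $\varphi_0(\ol a^t)\le t$, where $\ol a^t$ is the projection of $a$ onto $\partial_-D^t$ along the last coordinate. The normal cone to an intersection of smooth sublevel sets at a point is the cone generated by the outward gradients of the \emph{active} constraints (this uses that the interiors intersect, which holds for $t>\rho(P)$ by \eqref{e1-12}, so a Slater-type condition is available and the standard convex-analysis formula $V_t(a)=\sum$ of the individual normal cones applies). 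So the proof splits according to whether the second constraint $\varphi_0(\ol a^t)\le t$ is active, i.e. whether $\varphi_0(\ol a^t)=t$ or $\varphi_0(\ol a^t)<t$.

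First I would treat the case $\varphi_0(\ol a^t)<t$. Then in a neighbourhood of $a$ the constraint $\varphi_0(\ol a^t)\le t$ is strictly satisfied (the map $a\mapsto\ol a^t$ is continuous, being a projection onto the smooth boundary $\partial_-D^t$, which is well-defined and smooth because $\varphi$ is strictly convex in the last coordinate — this follows from (H4), aperiodicity of the last coordinate, which makes $\beta\mapsto\varphi(\a,\beta)$ strictly convex), so locally $\hat D^t$ coincides with $D^t$. Since $a\in\Gamma_+^t\subset\partial_+D^t$ and $a\notin\partial_0D^t$ forces $\nabla\varphi(a)$ to have a strictly positive last coordinate, $a$ lies on the smooth part of $\partial D^t$ and $V_t(a)=\{c\nabla\varphi(a):c\ge 0\}$, which is the first alternative. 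I would also note that the sub-case $a=\ol a^t\in\partial_0D^t$ falls here too: here $\nabla\varphi(a)\in\R^{d-1}\times\{0\}$, and one checks that even if $\varphi_0(\ol a^t)=t$ in this degenerate situation, the constraint $\varphi_0(\ol a^t)\le t$ does not add a new extreme ray because its gradient, computed via the chain rule through the projection, is proportional to $\nabla\varphi(a)$ when $a$ sits on $\partial_0D^t$ (the projection is the identity there and its derivative degenerates in exactly the right direction). This is why the first branch of \eqref{e1-14} covers both stated conditions.

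The substantive case is $\varphi_0(\ol a^t)=t$ with $a\notin\partial_0D^t$. Here both constraints are active at $a$, so $V_t(a)$ is generated by $\nabla\varphi(a)$ and by the gradient of the function $a\mapsto\varphi_0(\ol a^t)$. The point $a$ does not lie on $\partial_-D^t$ (since $a\in\partial_+D^t\setminus\partial_0D^t$), so $\ol a^t\ne a$ and I must differentiate the composite $a\mapsto\ol a^t\mapsto\varphi_0(\ol a^t)$. Writing $a=(\a,\beta)$ and $\ol a^t=(\a,\ol\beta(\a))$ where $\ol\beta(\a)$ is the smaller root of $\varphi(\a,\cdot)=t$, implicit differentiation of $\varphi(\a,\ol\beta(\a))=t$ gives $\partial_{\a}\ol\beta=-\nabla_\a\varphi(\ol a^t)/\partial_\beta\varphi(\ol a^t)$, and since $\ol a^t\in\partial_-D^t$ the denominator $\partial_\beta\varphi(\ol a^t)$ is strictly negative. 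Then the chain rule yields
\[
\nabla_\a\bigl(\varphi_0(\ol a^t)\bigr)=\nabla_\a\varphi_0(\ol a^t)-\frac{\partial_\beta\varphi_0(\ol a^t)}{\partial_\beta\varphi(\ol a^t)}\,\nabla_\a\varphi(\ol a^t),
\]
and the $\beta$-component of this gradient vanishes because $\varphi_0(\ol a^t)$ does not depend on $\beta$ (it depends on $a$ only through $\a$). Recognising $\kappa_a=-\partial_\beta\varphi_0(\ol a^t)/\partial_\beta\varphi(\ol a^t)$, this gradient is exactly $\nabla\varphi_0(\ol a^t)+\kappa_a\nabla\varphi(\ol a^t)$ \emph{projected to kill its last coordinate}; but one checks that $\nabla\varphi_0(\ol a^t)+\kappa_a\nabla\varphi(\ol a^t)$ already has zero last coordinate by the very definition of $\kappa_a$, so the generator is precisely $\nabla\varphi_0(\ol a^t)+\kappa_a\nabla\varphi(\ol a^t)$, giving the second branch of \eqref{e1-14}. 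The main obstacle is the careful bookkeeping at the interface between the two cases — verifying that the degenerate sub-case $a=\ol a^t\in\partial_0D^t$ with $\varphi_0(\ol a^t)=t$ genuinely produces only the single ray $\{c\nabla\varphi(a):c\ge0\}$ (so that the second-branch formula, which is only asserted for $a\notin\partial_0D^t$, is not needed there), and confirming throughout that the convexity and Slater conditions needed for the sum-of-normal-cones formula are in force for every $t>\rho(P)$.
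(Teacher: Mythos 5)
Your overall strategy coincides with the paper's: write $\hat D^t=(\Theta^t\times\R)\cap D^t$, use the nonempty interior of $D^t\cap D^t_0$ for $t>\rho(P)$ to justify the sum formula $V_t(a)=V_{\Theta^t\times\R}(a)+V_{D^t}(a)$ (Rockafellar, Corollary~23.8.1), and then identify the contribution of the second factor. In the main case $\varphi_0(\ol{a}^t)=t$, $a\notin\partial_0 D^t$, your implicit differentiation of $\varphi(\a,\ol\beta(\a))=t$ is a legitimate variant of the paper's argument (the paper instead uses that $\Theta^t\times\{0\}$ is the orthogonal projection of $D^t\cap D^t_0$, so that $V_{\Theta^t\times\R}(a)=\bigl(V_{D^t}(\ol{a}^t)+V_{D^t_0}(\ol{a}^t)\bigr)\cap(\R^{d-1}\times\{0\})$, and then intersects this cone with the horizontal hyperplane); both routes produce the generator $\nabla\varphi_0(\ol{a}^t)+\kappa_a\nabla\varphi(\ol{a}^t)$. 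Two small points you should make explicit on your route: $\partial_\beta\varphi(\ol{a}^t)<0$ holds because $a\neq\ol{a}^t$ and $\beta\mapsto\varphi(\a,\beta)$ is strictly convex (membership in $\partial_-D^t$ alone allows the derivative to vanish), and the vector $\nabla\varphi_0(\ol{a}^t)+\kappa_a\nabla\varphi(\ol{a}^t)$ is nonzero because $D^t\cap D^t_0$ has nonempty interior.

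The genuine gap is the degenerate corner case $a=\ol{a}^t\in\partial_0 D^t$ with $\varphi_0(a)=t$, which the first branch of \eqref{e1-14} must cover. You dispose of it by saying that the gradient of $a\mapsto\varphi_0(\ol{a}^t)$, ``computed via the chain rule through the projection,'' is proportional to $\nabla\varphi(a)$. But at such a point $\partial_\beta\varphi(a)=0$, so the implicit function theorem fails: $\ol\beta(\cdot)$ is defined only on one side of $\a$, is not differentiable at $\a$ (square-root-type behaviour of the lower root), and $\kappa_a$ is not even defined, the denominator being zero. Hence the constraint function has no gradient there, and the active-gradient recipe collapses precisely at the point you yourself flag as the main obstacle; nor can you fall back on the local-coincidence argument of your first case, since with $\varphi_0(a)=t$ the second constraint can be active arbitrarily close to $a$. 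The conclusion (a single ray) is correct, and the paper's mechanism gives it without differentiating the projection: since $\Theta^t\times\{0\}$ is the orthogonal projection of $D^t\cap D^t_0$, one has $V_{\Theta^t\times\R}(a)=\bigl(V_{D^t}(a)+V_{D^t_0}(a)\bigr)\cap(\R^{d-1}\times\{0\})$, and because the last coordinate of $\nabla\varphi_0(a)$ is strictly positive while that of $\nabla\varphi(a)$ vanishes, the intersection reduces to $\{c\nabla\varphi(a):c\geq0\}$, whence $V_t(a)=\{c\nabla\varphi(a):c\geq0\}$. You need an argument of this kind (projection identity or an equivalent supporting-hyperplane computation) at the corner; as a minor related point, in your first case only the continuity of $a\mapsto\ol{a}^t$ on $D^t$ is available and needed, since smoothness of this map also fails on $\partial_0 D^t$.
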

\begin{proof} Recall that for any $t >  \inf_a \max\{\varphi(a,\varphi_0(a)\}$, the
  set $\Theta^t\times\{0\}$ is the orthogonal  projection of the convex set $D^t\cap
  D^t_0$ onto the hyperplane $\R^{d-1}\times\{0\}$. This proves that the set $\Theta^t$ is
  convex itself. Moreover, for any $t > \inf_a \max\{\varphi(a,\varphi_0(a)\}$, the
  set $D^t\cap D^t_0$ has a non-empty interior. Since $D^t\cap D^t_0\subset \hat{D}^t$
  from this it follows that for any $t > \inf_a \max\{\varphi(a,\varphi_0(a)\}$, 
  set $\hat{D}^t = (\Theta^t\times\R)\cap D^t$  has also a non-empty interior and
  consequently, by Corollary~23.8.1 of Rockafellar~\cite{R}, 
\be\label{e1-14p}
V_t(a) ~=~ V_{\Theta^t\times\R}(a) + V_{D^t}(a), \quad \forall a\in\hat{D}^t, 
\ee
where $V_{\Theta^t\times\R}(a)$ denotes the normal cone to the set $\Theta^t\times\R$ at
the point $a$ and $V_{D^t}(a)$ is the normal cone to the set $D^t$ at $a$. Since under the
hypotheses of our lemma, 
\be\label{e1-14pp}
V_{D^t}(a) ~=~ \bigl\{ c\nabla\varphi(a) :~ c\geq 0\bigr\}, \quad \forall a\in\partial D^t
\ee
from this it follows that 
\[
V_t(a) ~=~ V_{D^t}(a) ~=~ \bigl\{c\nabla\varphi(a) : c\geq 0\bigr\}  
\]
whenever the point $a\in \Gamma_+^t$ belongs to the interior of the set
$\Theta^t\times\R$, i.e. when $\varphi_0(\ol{a}^t) < t$. The first equality of
\eqref{e1-14} is therefore verified.  Suppose now that the point $a\in\Gamma_+^t$
belongs to the boundary of the set $\Theta^t\times\R$, i.e. either
$a=\ol{a}^t\in\partial_0 D^t$ or $\varphi_0(\ol{a}^t) = t$.  Then 
\[
V_{\Theta^t\times\R}(a) ~=~ V_{D^t\cap D^t_0}(\hat{a}^t) \cap (\R^{d-1}\times\{0\})
\]
because the set $\Theta^t\times\{0\}$ is the orthogonal projection of $D^t\cap D^t_0$ onto
$\R^{d-1}\times\{0\}$. $V_{D^t\cap D^t_0}(\hat{a}^t)$ denotes here the normal cone to the set $D^t\cap D^t_0$ at the point
$\hat{a}^t$. Using therefore again Corollary~23.8.1 of Rockafellar~\cite{R}, we obtain 
\[
V_{\Theta^t\times\R}(a) ~=~ \left(V_{D^t}(\ol{a}^t) + V_{D^t_0}(\ol{a}^t)\right) \cap (\R^{d-1}\times\{0\})
\] 
where 
\[
V_{D^t_0}(\ol{a}^t) ~=~ \begin{cases}\bigl\{ c\nabla\varphi_0(\ol{a}^t) :~ c\geq 0\bigr\}
  &\text{ if $\varphi_0(\ol{a}^t) = t$,}\\
\{ 0\} &\text{ if $\varphi_0(\ol{a}^t) < t$,}
\end{cases}
\]
is the normal cone to the set $D^t_0$ at the point $\ol{a}^t$. Since the function
$\varphi_0$ is increasing with respect to the last variable, the last coordinate of
$\nabla\varphi_0(\ol{a}^t)$ is strictly positive and consequently, the last relations
combined with \eqref{e1-14p} and \eqref{e1-14pp} prove the second equality of \eqref{e1-14}. 
\end{proof}

The main result of our paper is the following theorem. As above, we denote by $K_t(z,z')$
the $t$-Martin kernel of the Markov process $(Z(n))$ with a 
reference point $z_0\in\Z^{d-1}\times\N$ and 
\[
G_t(z,z') ~\dot=~ \sum_{n=0}^\infty t^{-n} \P_x(Z(n) ~=~ z'). 
\]

\begin{theorem}\label{th1} Under the hypotheses (H0)-(H4), for any $t>\rho(P)$, the following assertions hold~: 
\begin{itemize}
\item[(i)] for any unit vector $q\in\R^{d-1}\times[0,+\infty[$ there
    exists a unique $a =\hat{a}_t(q)\in \Gamma_+^t$  such that $q\in
    V_t(\hat{a}_t(q))$, 
\item[(ii)] for any
  $a\in\hat{D}^t\cap\partial_+ D^t$ and any sequence of points
  $z_n\in\Z^{d-1}\times\N$, 
\be\label{e1-15}
\lim_{n\to\infty} K_t(z,z_n) = h_{a,t}(z)/h_{a,t}(z_0),\quad \quad \quad \forall \;
z\in\Z^{d-1}\times\N 
\ee 
whenever $\lim_{n\to\infty} |z_n|=\infty$ and $\lim_{n\to\infty}
  \dist(V_t(a),z_n/|z_n|)=0$.
\end{itemize}
\end{theorem}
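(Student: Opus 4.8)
The plan is to establish part (i) by a convexity/support-function argument and then derive part (ii) from a local limit theorem for the Green function $G_t$, exploiting the harmonic-function structure of $h_{a,t}$ that is already built into the definition \eqref{e1-13}.

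For part (i), the idea is to view $\hat D^t$ as a convex body (with nonempty interior for $t>\rho(P)$, as shown in the proof of Lemma~\ref{lem2}) and to use the fact that the normal cones $V_t(a)$, $a\in\partial\hat D^t$, partition the unit sphere: every unit vector $q$ is the outer normal of a supporting hyperplane of $\hat D^t$ at a unique point when the body is strictly convex in the relevant directions, and otherwise lies in the normal cone of a unique face. I would first show that the relevant portion of the boundary for $q\in\R^{d-1}\times[0,+\infty[$ is exactly $\Gamma_+^t=\hat D^t\cap\partial_+D^t$: if $q$ has nonnegative last coordinate then the supporting point $a$ must satisfy $\nabla\varphi(a)\cdot q$-type conditions forcing $a\in\partial_+D^t$, and the representation \eqref{e1-14} of $V_t(a)$ from Lemma~\ref{lem2} shows $q$ indeed lands in one of those cones. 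Uniqueness of $a=\hat a_t(q)$ follows from strict convexity of $D^t$ (guaranteed by (H2)--(H4): the Hessian of $\varphi$ is positive definite because the random walk $S$ genuinely spans $\Z^d$, so $\varphi$ is strictly convex and $\partial D^t$ contains no line segments) together with the explicit conical structure of $V_{\Theta^t\times\R}(a)$.

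For part (ii), the main work is a precise asymptotic estimate of $G_t(z,z_n)$ as $|z_n|\to\infty$ along the direction $V_t(a)$. The approach is the standard one for Martin boundaries of random walks with boundary reflection: express $G_t$ via a first-passage decomposition at the hyperplane $\Z^{d-1}\times\{0\}$, so that $G_t$ splits into a "bulk" part governed by the free walk $S$ (whose $t$-Green function has known exponential asymptotics $\sim C\exp(\hat a\cdot(z_n-z))$ with $\hat a$ the minimizing point on $\partial_+D^t$, by a saddle-point/steepest-descent analysis using (H3)--(H4)) and a "boundary" correction coming from excursions that touch $y=0$ (governed by $\varphi_0$, contributing the reflected exponential $\exp(\ol a^t\cdot z)$ or the polynomial prefactor $y$ in the critical cases). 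Taking the ratio $G_t(z,z_n)/G_t(z_0,z_n)$, the common factor $\exp(\hat a\cdot z_n)$ cancels and one is left with precisely the three-case formula \eqref{e1-13} for $h_{a,t}(z)/h_{a,t}(z_0)$; the three regimes $\varphi_0(\ol a^t)<t$, $a=\ol a^t\in\partial_0D^t$, and $\varphi_0(\ol a^t)=t$ correspond respectively to the boundary being "invisible," to a degenerate tangency, and to the boundary contributing at the same exponential rate (the last case being where the critical $\kappa_a$-term in \eqref{e1-14} matters). One must check directly that each $h_{a,t}$ satisfies $Ph_{a,t}=th_{a,t}$, which reduces to the identities $\varphi(a)=t$ on $\partial_+D^t$ and the matching condition at $y=0$ relating $\mu$ and $\mu_0$ via $\varphi_0$; this both confirms the limit is $t$-harmonic and pins down the constants in the asymptotics.

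The main obstacle is the uniformity of the Green-function asymptotics over the cone of directions $z_n/|z_n|\to V_t(a)$, especially at the transitional parameter values: when $\varphi_0(\ol a^t)=t$ the two exponential contributions have equal rate and the subexponential corrections must be controlled to see which term survives, and when $a\in\partial_0D^t$ the reflected point $\ol a^t$ coincides with $a$ so the leading terms collide and a polynomial (order-$y$) correction emerges from a second-order saddle-point expansion. Handling these borderline cases rigorously — i.e. showing the first-passage series over boundary excursions converges with the right leading behaviour and that the saddle point stays in the interior of the relevant face of $\partial\hat D^t$ uniformly in $n$ — is where most of the technical effort will go; away from these degeneracies the argument is a routine local-limit-theorem computation.
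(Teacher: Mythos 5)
Your sketch of part (i) via normal cones of the convex body $\hat{D}^t$ is plausible, but the heart of the theorem is part (ii), and there your proposal stops exactly where the proof would have to begin. The first-passage decomposition at $\Z^{d-1}\times\{0\}$, the saddle-point asymptotics of $G_t(z,z_n)$ along directions approaching $V_t(a)$, and above all the uniform control in the two degenerate regimes (the case $\varphi_0(\ol{a}^t)=t$, where the bulk and boundary contributions compete at the same exponential rate, and the case $a=\ol{a}^t\in\partial_0 D^t$, where the two exponentials collide and the linear-in-$y$ term of \eqref{e1-13} must be extracted from a second-order expansion) constitute essentially the entire analytic content of the statement; you explicitly defer them as ``where most of the technical effort will go'' without supplying them. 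As written, nothing in the proposal shows that the excursion series has the claimed leading behaviour, nor that the limit in \eqref{e1-15} exists and is the same for every sequence $z_n$ with $\dist(V_t(a),z_n/|z_n|)\to 0$ (including sequences staying at bounded distance from the boundary hyperplane), which is precisely what separates the three cases of \eqref{e1-13}. So this is a research programme rather than a proof: you would in effect be redoing, for every $t$, the full analysis of the Martin boundary of the reflected walk.

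The paper takes a different and much shorter route, which your proposal misses: it quotes the case $t=1$, established in \cite{Ignatiouk:07} (under hypotheses that can be relaxed to $\mu$ a probability measure, $\mu_0$ substochastic and $\rho(P)<1$), and reduces the general case $t>\rho(P)$ to it by an exponential change of measure. Choosing $\tilde{a}_t\in\partial D^t\cap D^t_0$ and twisting the kernel to $\tilde{p}(z,z')=p(z,z')\exp(\tilde{a}_t\cdot(z'-z))/t$ replaces $\varphi,\varphi_0$ by $\varphi(\cdot+\tilde{a}_t)/t$ and $\varphi_0(\cdot+\tilde{a}_t)/t$, so that $D^t$, $\hat{D}^t$, $\Gamma_+^t$, the normal cones and the functions $h_{a,t}$ are simply translated by $\tilde{a}_t$, while $\tilde{G}_1(z,z')=G_t(z,z')\exp(\tilde{a}_t\cdot(z'-z))$; assertions (i) and (ii) for general $t$ then follow at once from the $t=1$ statement, with no new asymptotic analysis. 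If you intend to keep your direct approach, the borderline regimes must be worked out in full (this is the content of the cited paper); otherwise the efficient move is the tilting reduction, which shows the hard work needs to be done only once.
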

\noindent
Assertion (ii) proves  that a sequence  $z_n\in\Z^{d-1}\times\N$ with
$\lim_{n\to\infty}|z_n| = \infty$, converges to a point on the $t$-Martin boundary  if and only if 
\[
\lim_{n\to\infty} ~\dist\left(V_t(a),z_n/|z_n|\right)=
  0 
\]
for some $a\in\hat{D}\cap\partial_+ D$. The $t$-Martin compactification is therefore
stable if and only if $V_t(\hat{a}_t(q)) = V_s(\hat{a}_s(q))$ for any unit vector
$q\in\R^{d-1}\times[0,+\infty[$ and all $t > s > \rho(P)$. 

\medskip

Before proving our results, Theorem~\ref{th1} is illustrated on the example, it is shown that under quite general assumptions, the $t$-Martin
compactification of a random walk on a half-plane
$\Z\times\N$  is unstable.  This is a subject 
of Section~\ref{example}. In Section~\ref{proof-of-prop1}, we prove  Proposition~\ref{pr1}. 
Section~\ref{proof-of-theorem} is devoted to the proof of Theorem~\ref{th1}. 

\section{Example}\label{example}
Recall that under the hypotheses (H0)-(H3), by Proposition~\ref{pr1}, the convergence norm of our
transition kernel $P$ is given by 
\[
\rho(P) = \inf_{a\in\R^2}
\max\{\varphi(a), \varphi_0(a)\}.
\]
In this section, we consider a particular case when  $d=2$ and 
\be\label{e2-1}
\inf_{a\in\R^2} \max\{\varphi(a), \varphi_0(a)\} ~>~ \inf_{a\in\R^2} \varphi(a). 
\ee
Then the minimum of function $\max\{\varphi(a), \varphi_0(a)\}$ over $a\in\R^2$ is
achieved at some point $a^*=(\a^*,\beta^*)$ where 
\[
\left.\frac{\partial}{\partial\beta}\varphi(\a,\beta)\right|_{(\a,\beta) = a^*} ~\leq~ 0 \quad
\text{ and } \quad \left.\frac{\partial}{\partial\beta}\varphi_0(\a,\beta)\right|_{(\a,\beta) = a^*} ~>~ 0.
\]
The second inequality holds here because the function $\varphi_0(\a,\beta)$ is increasing with
respect to the second variable $\beta$, and to prove the first inequality it is
sufficient to notice that otherwise, there is another point $a=(\a,\beta)$ with $\a=\a^*$
and $\beta < \beta^*$ for which $
\max\{\varphi(a), \varphi_0(a)\} ~<~ \max\{\varphi(a^*), \varphi_0(a^*)\}$. Finally, 
we will assume that such a point $a^*$ is unique and that 
\be\label{e2-2}
\left.\frac{\partial}{\partial\beta}\varphi(\a,\beta)\right|_{(\a,\beta) = a^*} ~<~ 0. 
\ee
Then clearly, $\varphi(a^*) ~=~\varphi_0(a^*)$
and by implicit function theorem, in a neighborhood the point $a^*$, one can parametrize the intersection of the
surfaces ${\cal
  C} ~=~ \{(\a,\beta,t)\in\R^3 : ~t = \varphi(\a,\beta)\}$ and ${\cal
  C}_0 ~=~ \{(\a,\beta,t)\in\R^3 : ~t = \varphi_0(\a,\beta)\}$ as follows : there are
$\eps_1>0$, 
$\eps_2 >0$ and a smooth  function
$\a\to\beta(\a)$ from $[\a^* - \eps_1, \a^* +\eps_2]$ to $\R$ such that $
\beta(\a^*) = \beta^*$ 
and for any $\a^* - \eps_1 \leq \a \leq \a^* + \eps_1$,  
\be
\left.\frac{\partial}{\partial\beta}\varphi(\a,\beta)\right|_{\beta = \beta(\a)} ~<~ 0,
\quad  \quad
\left.\frac{\partial}{\partial\beta}\varphi_0(\a,\beta)\right|_{\beta = \beta(\a)} ~>~ 0, 
\ee
and 
\begin{multline}
\{(\a,\beta,t) \in{\cal C}\cap{\cal C}_0 : \a^* - \eps_1 \leq \a \leq \a^* + \eps_2 \} \\~=~
\{(\a,\beta(\a),t(\a)), \; \a^* - \eps_1 \leq \a \leq \a^* + \eps_2\}
\end{multline}
with 
\be
t(\a) ~=~ \varphi(\a,\beta(\a)) ~=~ \varphi_0(\a,\beta(\a)) ~\geq~ t(\a^*). 
\ee 
Moreover, since the point $a^*$, where the minimum of the function
$\max\{\varphi,\varphi_0\}$ is achieved, is assumed to be unique, the last inequality holds
with the equality if and only if $\a=\a^*$ and without any restriction of generality we
can assume that $t(\a^* - \eps_1) = t(\a^* + \eps_2) ~>~ t(\a^*)$. Then for any $t(\a^*) <
t \leq t(\a^* - \eps_1)$, there are exactly two points $\a^*-\eps_1 \leq \a_1(t) < \a^*$ and
$\a^* < \a_2(t) \leq \a^* + \eps_2$ such that for $a_i(t) = (\a_i(t),\beta(\a_i(t)))$, 
\[
\varphi(a_i(t)) ~=~ \varphi_0(a_i(t)) ~=~ t(\a_i(t) ) ~=~
t, \quad \forall \, i\in\{1,2\},
\]
\[
\Theta^t ~=~ [\a_1(t),\a_2(t)], \quad \hat{D}^t ~=~ \{(\a,\beta) \in \R^2 :
~\varphi(\a,\beta) \leq t, \; \a_1(t) \leq \a \leq a_2(t)\}, 
\]
and $
\Gamma_+^t ~=~ \{a=(\a,\beta)\in\partial_+ D^t : \a_1(t) \leq \a \leq \a_2(t)\}$ 
is the arc on the boundary $\partial_+ D^t$ with the end points in $\tilde{a}_1(t)$ and
$\tilde{a}_2(t)$ where $\tilde{a}_i(t)=(\tilde\a_i(t),\tilde\beta_i(t))$ is a unique point on the
boundary $\partial_+ D^t$ with $\tilde\a_i(t) = \a_i(t)$ for $i=1,2$, 
(see Figure~1).
\begin{figure}[ht]
\resizebox{10cm}{6cm}{\includegraphics{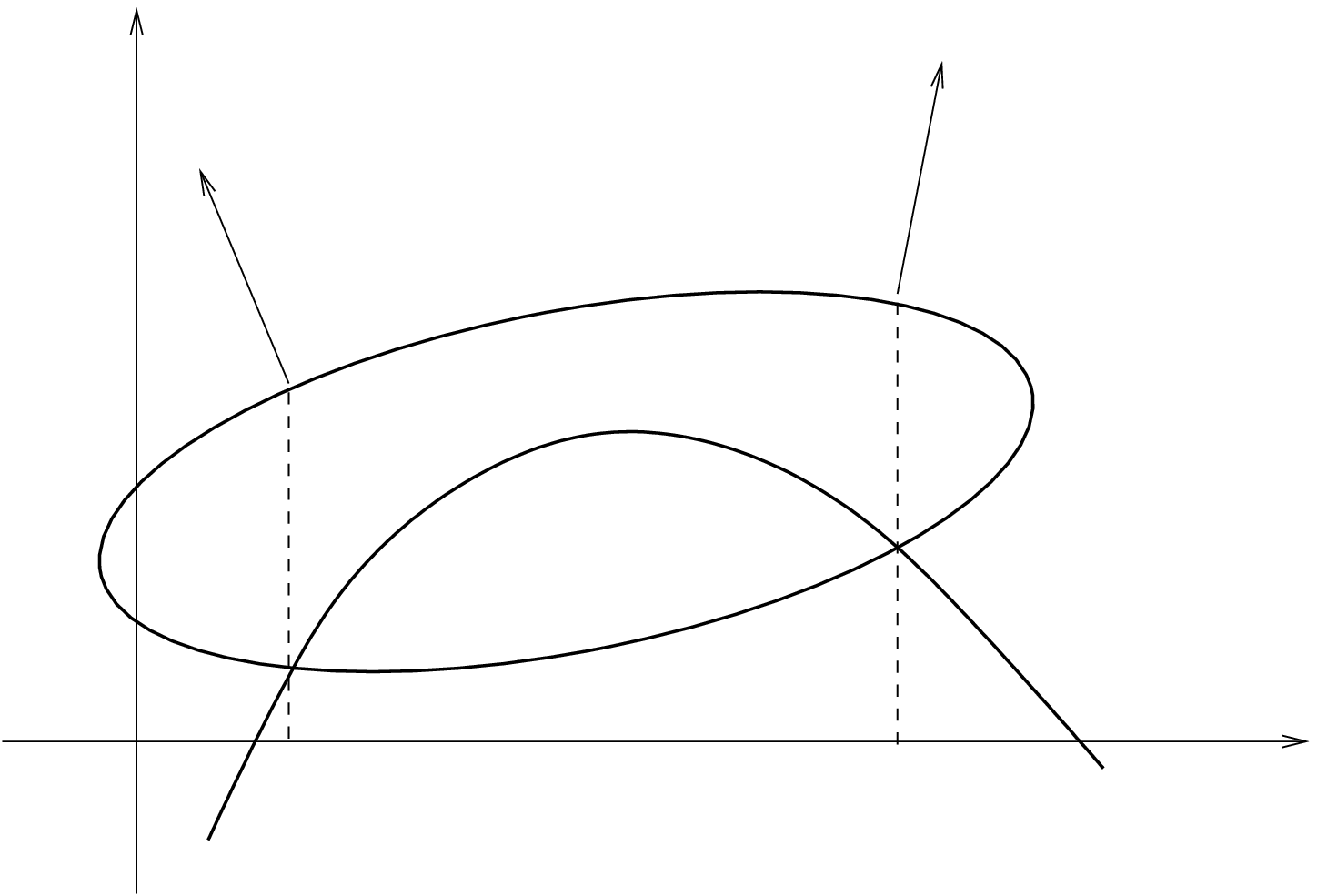}}
\put(-130,40){$D_0^t$}
\put(-88,115){$\tilde{a}_2(t)$}
\put(-83,63){${a}_2(t)$}
\put(-80,135){$\nabla\varphi(\tilde{a}_2(t))$} 
\put(-145,120){$\Gamma_+^t$} 
\put(-91,20){$(\a_2(t),0)$}
\put(-221,20){$(\a_1(t),0)$}
\put(-220,89){$\tilde{a}_1(t)$}
\put(-215,48){${a}_1(t)$}
\put(-230,120){$\nabla\varphi(\tilde{a}_1(t))$} 
\put(-280,20){$(0,0)$} 
\put(-245,60){$D^t$}
\caption{ }
\end{figure}  

\noindent 
Furthermore, by Lemma~\ref{lem2}, for any $t(\a^*) < t \leq t(\a^* - \eps_1)$ and $a\in\Gamma_+^t$, 
\[
V_t(a) ~=~ \begin{cases} \{c_1e_1 + c_2 \nabla\varphi(\tilde{a}_2(t)) : ~ c_i\geq 0\}
  &\text{ if $a=\tilde{a}_2(t)$},\\
 \{- c_1e_1 + c_2 \nabla\varphi(\tilde{a}_1(t)) : ~ c_i\geq 0\} 
  &\text{ if $a=\tilde{a}_1(t)$},\\
\{ c\nabla\varphi(a) : ~c\geq 0\} &\text{ otherwise}.
\end{cases}
\]
Hence, by Theorem~\ref{th1} any sequence of points $z_n\in\Z\times\N$ with
$\lim_n|z_n| = \infty$ converges in the $t$-Martin compactification of $\Z\times\N$ if and
only if one of the following conditions is satisfied : 
\begin{itemize}
\item[--] either $\lim_{n\to\infty} \arg(z_n) = \gamma$ for some 
    $
    \arg(\nabla\varphi(\tilde{a}_2(t)) < \gamma < \arg(\nabla\varphi(\tilde{a}_2(t)),$
\item[--] or $\limsup_{n\to\infty} \arg(z_n) ~\leq~ \arg(\nabla\varphi(\tilde{a}_2(t))$, 
\item[--] or $\liminf_{n\to\infty} \arg(z_n) ~\geq~ \arg(\nabla\varphi(\tilde{a}_2(t))$.
\end{itemize}
In particular, any sequence $z_n\in\Z\times\N$ with
$\lim_n|z_n| = \infty$ and satisfying the inequality $
\arg(z_n) ~\leq~ \arg(\nabla\varphi(\tilde{a}_2(t))$, for all $n\in\N$, 
converges to a point of the $t$-Martin boundary of $\Z\times\N$.  

Remark finally that $a_i(t) \to a^*$ as $t\to t(\a^*)$ for any $i\in\{1,2\}$. From this it
follows that $\tilde{a}_i(t) \to \tilde{a}^*$ as $t\to t(\a^*)$ for any $i\in\{1,2\}$ where
$\tilde{a^*}=(\tilde\a^*,\tilde\beta^*)$ is the unique point on the boundary $\partial_+
D^t$ with $\tilde\a^* = \a^*$, and consequently, 
\[
\lim_{t\to t(a^*)} \nabla\varphi(\tilde{a}_1(t)) ~=~ \lim_{t\to t(a^*)} \nabla\varphi(\tilde{a}_2(t)) ~=~ \nabla\varphi(\tilde{a}^*).
\]
Since clearly, $\nabla\varphi(\tilde{a}_1(t)) \not= \nabla\varphi(\tilde{a}_2(t))$ for
$t(\a^*) < t \leq t(\a^* - \eps_1)$, we conclude that at least one of the function $t \to
\nabla\varphi(\tilde{a}_1(t))$ or $t\to \nabla\varphi(\tilde{a}_2(t))$ is not constant on
the interval $[t(\a^*), t(\a^* - \eps_1)]$ and hence, there are $t,t'\in ]t(\a^*), t(\a^*
  - \eps_1)]$ such that $t\not= t'$ and $
\nabla\varphi(\tilde{a}_i(t)) ~\not=~ \nabla\varphi(\tilde{a}_i(t'))$ 
either for $i=1$ or for $i=2$. Suppose that this relation holds for $i=2$ (the case when
$i=1$ is quite similar) and let 
\[
\arg(\nabla\varphi(\tilde{a}_i(t))) ~<~ \arg(\nabla\varphi(\tilde{a}_i(t'))).
\]
Then in the $t'$-Martin compactification, any sequence of points $z_n\in\Z\times\N$ with
$\lim_n|z_n| = \infty$ and 
\[
\arg(\nabla\varphi(\tilde{a}_i(t))) ~\leq~ \arg(z_n) ~\leq~
\arg(\nabla\varphi(\tilde{a}_i(t'))), \quad \forall n\in\N,
\]
converges to a point of the $t'$-Martin boundary, while in
the $t$-Martin compactification 
such a sequence converges to a point of the $t$-Martin boundary if and only if there exist a
limit $\lim_n z_n/|z_n|$. The following proposition is therefore proved.

\begin{prop}\label{pr2-1} Let the conditions (H0)-(H4) be satisfied. Suppose moreover that the minimum
  of the function $\max\{\varphi,\varphi_0\}$ is attained at a 
  unique point $a^*$ and the inequalities \eqref{e2-1} and \eqref{e2-2} hold. Then the $t$-Martin
  compactification of the transition kernel $P$ is unstable.
\end{prop}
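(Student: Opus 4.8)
The plan is to turn the geometric discussion preceding the statement into a clean three–step argument, using Proposition~\ref{pr1}, Lemma~\ref{lem2} and Theorem~\ref{th1} as black boxes.

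\textbf{Step 1: local parametrisation near the minimiser.} By Proposition~\ref{pr1} one has $\rho(P)=\inf_a\max\{\varphi(a),\varphi_0(a)\}$, and by \eqref{e2-1} this infimum exceeds $\inf_a\varphi(a)$, so for $t\geq\rho(P)$ the set $D^t$ is compact with non-empty interior. At the (assumed unique) minimiser $a^*=(\a^*,\beta^*)$ of $\max\{\varphi,\varphi_0\}$ I would first record $\varphi(a^*)=\varphi_0(a^*)=\rho(P)$, together with $\frac{\partial}{\partial\beta}\varphi(a^*)<0$ (hypothesis \eqref{e2-2}) and $\frac{\partial}{\partial\beta}\varphi_0(a^*)>0$ (monotonicity of $\varphi_0$ in the last variable). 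Since then $\frac{\partial}{\partial\beta}(\varphi-\varphi_0)(a^*)<0$, the implicit function theorem applied to the system $\varphi=\varphi_0=t$ near $(a^*,\rho(P))$ produces $\eps_1,\eps_2>0$ and a smooth curve $\a\mapsto(\beta(\a),t(\a))$ on $[\a^*-\eps_1,\a^*+\eps_2]$ lying in ${\cal C}\cap{\cal C}_0$, with $t(\a^*)=\rho(P)$ the strict minimum of $t(\cdot)$ (strict because $a^*$ is the unique global minimiser); after shrinking I may assume $t(\a^*-\eps_1)=t(\a^*+\eps_2)>\rho(P)$.

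\textbf{Step 2: normal cones along the arcs.} Fix $t\in(\rho(P),t(\a^*-\eps_1)]$. Solving $t(\a)=t$ gives $\a_1(t)<\a^*<\a_2(t)$, and via \eqref{e1-10}--\eqref{e1-12} one obtains $\Theta^t=[\a_1(t),\a_2(t)]$, $\hat D^t=(\Theta^t\times\R)\cap D^t$, and $\Gamma_+^t$ is the arc of $\partial_+D^t$ with endpoints $\tilde a_1(t),\tilde a_2(t)$, the unique points of $\partial_+D^t$ with first coordinates $\a_1(t),\a_2(t)$. For $t$ close to $\rho(P)$ these endpoints satisfy $\varphi_0(\ol{a}^t)=t$ and $\tilde a_i(t)\notin\partial_0D^t$, so Lemma~\ref{lem2} gives $V_t(a)=\{c\nabla\varphi(a):c\geq0\}$ for $a$ in the relative interior of the arc, together with the two–dimensional wedges $V_t(\tilde a_2(t))=\{c_1e_1+c_2\nabla\varphi(\tilde a_2(t)):c_i\geq0\}$ and $V_t(\tilde a_1(t))=\{-c_1e_1+c_2\nabla\varphi(\tilde a_1(t)):c_i\geq0\}$ — the horizontal extreme ray appearing because the last coordinate of $\nabla\varphi_0(\ol{a}^t)+\kappa_a\nabla\varphi(\ol{a}^t)$ vanishes by the very definition of $\kappa_a$. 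By Theorem~\ref{th1}, a sequence $z_n\in\Z\times\N$ with $|z_n|\to\infty$ converges in the $t$-Martin compactification if and only if $\dist(V_t(a),z_n/|z_n|)\to0$ for some $a\in\Gamma_+^t$, and then $K_t(\cdot,z_n)\to h_{a,t}/h_{a,t}(z_0)$.

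\textbf{Step 3: producing non-homeomorphic compactifications.} As $t\downarrow\rho(P)$ we have $\a_i(t)\to\a^*$, hence $\tilde a_i(t)\to\tilde a^*$ and $\nabla\varphi(\tilde a_1(t)),\nabla\varphi(\tilde a_2(t))\to\nabla\varphi(\tilde a^*)$. On the other hand hypothesis (H2) makes the Hessian of $\varphi$ positive definite, so $\varphi$ and hence $D^t$ are strictly convex and distinct boundary points of $D^t$ have distinct outer normal directions; since $\tilde a_1(t)\neq\tilde a_2(t)$ for $t>\rho(P)$, this forces $\nabla\varphi(\tilde a_1(t))\neq\nabla\varphi(\tilde a_2(t))$. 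Were both maps $t\mapsto\nabla\varphi(\tilde a_i(t))$ constant they would equal their common limit $\nabla\varphi(\tilde a^*)$, a contradiction; so one of them, say $i=2$ (the case $i=1$ being symmetric, with $-e_1$ in place of $e_1$), is non-constant, and I may pick $t,t'\in(\rho(P),t(\a^*-\eps_1)]$ with $\arg\nabla\varphi(\tilde a_2(t))<\arg\nabla\varphi(\tilde a_2(t'))$. Now take $z_n\in\Z\times\N$ with $|z_n|\to\infty$ whose argument oscillates between these two values and so has no limit. Because $\arg\nabla\varphi(\tilde a_2(t))\geq0=\arg e_1$, every $z_n/|z_n|$ lies in the wedge $V_{t'}(\tilde a_2(t'))$, hence by Step~2 the sequence converges in the $t'$-Martin compactification. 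In the $t$-Martin compactification, however, no single $V_t(a)$, $a\in\Gamma_+^t$, contains the two limiting directions: a relative-interior $a$ gives a single ray, $V_t(\tilde a_2(t))$ spans only angles in $[0,\arg\nabla\varphi(\tilde a_2(t))]$, and $V_t(\tilde a_1(t))$ only angles $\geq\arg\nabla\varphi(\tilde a_1(t))>\arg\nabla\varphi(\tilde a_2(t))$ (the first coordinate being strictly decreasing along $\partial_+D^t$ as the normal angle increases). Consequently $K_t(\cdot,z_n)$ has two distinct pointwise limits along the two subsequences (namely $h_{\tilde a_2(t),t}$ and $h_{\hat a_t(q),t}$ for the other limiting direction $q$, distinct by the form \eqref{e1-13}), so the sequence does not converge in the $t$-Martin compactification. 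Hence the $t$- and $t'$-Martin compactifications are not homeomorphic, i.e. the $t$-Martin compactification of $P$ is unstable.

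I expect the crux to be the second half of Step~2: confirming that for $t$ near $\rho(P)$ the arc endpoints $\tilde a_i(t)$ fall precisely into the case $\varphi_0(\ol{a}^t)=t$, $\tilde a_i(t)\notin\partial_0D^t$ of Lemma~\ref{lem2}, so that the relevant normal cones are the genuine two-dimensional wedges used above; once this is in place, Step~3 is just bookkeeping about which angular intervals a single normal cone can and cannot cover.
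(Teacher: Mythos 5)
Your proposal is correct and follows essentially the same route as the paper's own argument (the whole of Section~\ref{example}): the implicit-function-theorem parametrisation near the unique minimiser $a^*$, the identification of $\Theta^t$, $\Gamma_+^t$ and the normal cones via Lemma~\ref{lem2}, the convergence criterion from Theorem~\ref{th1}, and the limit $t\downarrow\rho(P)$ forcing one of the maps $t\mapsto\nabla\varphi(\tilde{a}_i(t))$ to be non-constant, which produces a sequence converging in the $t'$- but not in the $t$-Martin compactification. Your extra justifications (strict convexity of $\varphi$ from (H2) to get $\nabla\varphi(\tilde{a}_1(t))\neq\nabla\varphi(\tilde{a}_2(t))$, and the check that the arc endpoints fall into the wedge case of Lemma~\ref{lem2}) merely make explicit steps the paper leaves implicit.
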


\section{Proof of Proposition~\ref{pr1}}\label{proof-of-prop1}
We prove this proposition by using large deviation principle of the sample paths of scaled
processes $Z^\eps(t) = \eps Z([t/\eps])$ with $\eps \to 0$. Before proving this proposition we
recall the definition of the sample path large deviation principle. 

Throughout this section, for $t\in[0,+\infty[$, we denote by $[t]$ the integer part of
    $t$. 

\smallskip
\noindent
{\bf Definitions : \;}{\em 1) Let $D([0,T],\R^{d})$ denote the set of all right continuous with left
limits functions from $[0,T]$ to $\R^{d}$ endowed with Skorohod metric
(see Billingsley~\cite{Billingsley}). Recall that   
a mapping $I_{[0,T]}:~D([0,T],\R^{d})\to
[0,+\infty]$ is a good rate function on $D([0,T],\R^{d})$ if for any $c\geq 0$ and 
 any compact set $V\subset \R^{d}$, the set
\[
\{ \varphi \in D([0,T],\R^{d}): ~\phi(0)\in V \; \mbox{
and } \; I_{[0,T]}(\varphi) \leq c \}
\]
is compact in $D([0,T],\R^{d})$.  According to this definition, a good
rate function is lower semi-continuous. 

2) For a Markov chain $(Z(t))$ on $E\subset\R^d$ the family of scaled processes
$(Z^\eps(t) =\eps Z([t/\eps]), 
\,t\in[0,T])$,  is said to
satisfy {\it sample path large deviation principle} in $D([0,T], \R^{d})$ with a rate function
$I_{[0,T]}$ if for any $z\in\R^{d}$ 
\begin{equation}\label{e4-1}
\lim_{\delta\to 0} \;\liminf_{\eps\to 0} \; \inf_{z'\in E : |\eps z'-z|<\delta} \eps
\log\P_{z'}\left( Z^\eps(\cdot)\in {\cal 
O}\right) \geq -\inf_{\phi\in{\cal O}:\phi(0)=z} I_{[0,T]}(\phi), 
\end{equation}
for every open set ${\cal
O}\subset D([0,T],\R^{d})$,
and
\begin{equation}\label{e4-2}
\lim_{\delta\to 0} \;\limsup_{\eps\to 0} \; \sup_{z' \in E : |\eps z'-z|<\delta}
\eps\log\P_{z'}\left( Z^\eps(\cdot)\in 
F\right) \leq -\inf_{\phi\in F:\phi(0)=z} I_{[0,T]}(\phi).
\end{equation}
 for every closed set $F\subset   D([0,T],\R^{d})$. 
}

We refer to sample path large deviation
principle as SPLD principle. Inequalities (\ref{e4-1}) and (\ref{e4-2}) are
referred as lower and upper SPLD bounds respectively.

Recall that the convex conjugate $f^*$ of a function $f :\R^d\to \R$ is defined by 
\[
f^*(v) ~=~ \sup_{a\in\R^d} (a\cdot v - f(a)), \quad v\in\R^d. 
\]

The following 
proposition provides the SPLD principle for the  scaled processes
$Z^\eps(t) ~=~ \eps Z([t/\eps])$ for our random walk $(Z(n))$ on $\Z\times\N$. 
\begin{prop}\label{pr4-1} Under  the hypotheses $(H_0)-(H_4)$, for every $T>0$,  the family of  scaled  
  processes $(Z^\eps(t), \, t\in[0,T])$ satisfies SPLD
  principle in $D([0,T], \R^{d})$  with a 
  good rate function 
\[
I_{[0,T]}(\phi) ~=~ \begin{cases} \int_0^T L(\phi(t),\dot\phi(t)) \, dt, &\text{ if
    $\phi$ is absolutely continuous and  }\\ &\text{ $\phi(t)\in\R^{d-1}\times\R_+$ for all
    $t\in[0,T]$,}\\
+\infty &\text{ otherwise.}
\end{cases}
\]
where  for any  $z=(x,y)\R^{d-1}\times[0,+\infty[$
    and $v\in\R^{d}$, the local rate function $L$ is given by 
\[
L(z,v) ~=~ \begin{cases} (\log \varphi)^*(v) &\text{ if $y>0$},\\
(\log~\max\{\varphi, \varphi_0\})^*(v) &\text{ if $y=0$}.
\end{cases}
\]
\end{prop}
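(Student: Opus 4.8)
The plan is to establish the SPLD principle for $(Z^\eps(t))$ by combining the well-known sample path large deviation results for the \emph{homogeneous} random walk $S(t)$ on $\Z^d$ with a gluing/contraction argument that handles the reflecting boundary hyperplane $\Z^{d-1}\times\{0\}$. The starting point is Mogulskii/Freidlin--Wentzell-type estimates: for the spatially homogeneous walk with jump generating function $\varphi$, which is finite everywhere on $\R^d$ by (H3), the scaled process $S^\eps$ satisfies SPLD in $D([0,T],\R^d)$ with good rate function $\int_0^T (\log\varphi)^*(\dot\phi)\,dt$ (Mogulskii's theorem; see the references \cite{D-E-W,D-E,Ignatiouk:02,Ignatiouk:04} cited earlier). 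An identical statement holds for the walk with jumps $\mu_0$, with rate $(\log\varphi_0)^*$. The key analytic point to record first is that $(\log\max\{\varphi,\varphi_0\})^* = (\log\varphi)^* \,\square\, (\log\varphi_0)^*$ is \emph{not} quite what we need; rather, since $\max\{\varphi,\varphi_0\}$ is the pointwise max of two log-convex functions, $\log\max\{\varphi,\varphi_0\}=\max\{\log\varphi,\log\varphi_0\}$ is convex, and its conjugate governs the instantaneous cost of a step taken \emph{on} the boundary. I would verify that $L(z,\cdot)$ as defined is convex, lower semicontinuous, and has superlinear growth (hence $I_{[0,T]}$ is a good rate function and lower semicontinuous), using (H3) to get finiteness of $\varphi,\varphi_0$ everywhere and the ensuing steepness of the conjugates.

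Next I would prove the upper SPLD bound \eqref{e4-2}. The standard route is exponential tightness plus the bound on cylinder sets. Exponential tightness in the Skorohod space follows from (H3): the one-step increments have finite exponential moments uniform in the current state (there are only two kernels $\mu,\mu_0$), so a Chernoff/maximal-inequality argument gives $\limsup_\eps \eps\log\P_{z'}(\,\|Z^\eps\|_{\mathrm{osc},\delta}>\eta\,)\to-\infty$. For the cylinder-set upper bound, one discretizes $[0,T]$ into small intervals; on each subinterval the walk either stays in the interior (cost controlled by $(\log\varphi)^*$) or touches the boundary (cost controlled by $(\log\max\{\varphi,\varphi_0\})^*$, which is the larger/cheaper? — precisely, $(\log\max\{\varphi,\varphi_0\})^*\le\min\{(\log\varphi)^*,(\log\varphi_0)^*\}$, so touching the boundary is never cheaper than an interior step and equals the stated cost when $y=0$). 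Taking Laplace/Varadhan-type estimates over the finitely many kernels and then $\delta\to 0$ yields \eqref{e4-2}; projectivity and exponential tightness upgrade the finite-dimensional bound to the closed-set bound in $D([0,T],\R^d)$.

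For the lower SPLD bound \eqref{e4-1}, I would use the usual strategy of approximating an arbitrary absolutely continuous path $\phi$ with $I_{[0,T]}(\phi)<\infty$ by a piecewise-linear path $\phi_m$ whose velocity is constant on each subinterval and with $I_{[0,T]}(\phi_m)\to I_{[0,T]}(\phi)$; lower semicontinuity of $I_{[0,T]}$ and the Skorohod-continuity of the map $\phi\mapsto$ (neighborhood) then let us reduce to the piecewise-linear case. On subintervals where $\phi$ stays strictly in the interior $\R^{d-1}\times(0,\infty)$, the local change-of-measure (tilting $\mu$ by the exponential $e^{a\cdot z}$ with $a\in\partial(\log\varphi)^*(\dot\phi)$) gives the interior lower bound exactly as for the homogeneous walk, since away from the boundary $Z$ evolves as $S$. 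On subintervals where $\phi$ runs along $\{y=0\}$, one needs a path that alternates: stepping up via $\mu$ and being reflected/stepping via $\mu_0$ so as to realize the drift $\dot\phi$ at instantaneous cost $(\log\max\{\varphi,\varphi_0\})^*(\dot\phi)$; this is where the reflecting structure enters and a small local perturbation (pushing $\phi$ slightly into the interior, or using a mixture of the two tilts) is required to keep the path admissible while paying only the stated cost up to $o(1)$.

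The main obstacle is precisely this boundary lower bound: showing that for a velocity $v=(v_x,0)$ tangent to the boundary, the reflected walk can follow the direction $v$ along $\{y=0\}$ at exponential cost arbitrarily close to $(\log\max\{\varphi,\varphi_0\})^*(v)$. The difficulty is that neither single tilted kernel alone achieves this cost with a zero vertical drift while remaining in $\N$ in the vertical coordinate; one must interleave excursions governed by the $\varphi$-tilt (in the interior) with boundary steps governed by the $\varphi_0$-tilt, and then show by a time-scale separation / law-of-large-numbers argument that the empirical cost converges to the conjugate of the max. Equivalently, one invokes the identification $(\log\max\{\varphi,\varphi_0\})^* = \mathrm{cl\,conv}\,\min\{(\log\varphi)^*,(\log\varphi_0)^*\}$ restricted appropriately, which expresses the optimal boundary cost as a convex combination of an interior cost and a boundary cost — and that convex combination is realized by the interleaving scheme. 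Once this local estimate is in hand, gluing the subinterval estimates via the Markov property and letting the discretization mesh and the perturbation tend to zero completes the lower bound, and together with the upper bound and exponential tightness this proves the proposition. The remaining items — measurability, the reference-point uniformity with $|\eps z'-z|<\delta$ in \eqref{e4-1}--\eqref{e4-2}, and lower semicontinuity of $\phi\mapsto I_{[0,T]}(\phi)$ — are routine given (H3) and the convexity of $L$.
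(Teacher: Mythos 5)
The paper does not prove this proposition from first principles: its ``proof'' consists of invoking the known large deviation machinery for Markov processes with discontinuous statistics --- the upper bound and the goodness of $I_{[0,T]}$ are taken from Dupuis--Ellis--Weiss \cite{D-E-W}, and the lower bound is assembled from the local estimates of \cite{Ignatiouk:02}, the general lower bound of Dupuis--Ellis \cite{D-E}, and the integral representation of the rate function from \cite{Ignatiouk:04}. Your plan is, in outline, a sketch of how those cited results are themselves proved (exponential tightness and cylinder-set estimates for the upper bound; exponential tilting, piecewise-linear approximation and a boundary interleaving scheme for the lower bound; the convex-duality identity $(\max\{f,g\})^*=\cl\,\conv\min\{f^*,g^*\}$ to interpret the boundary cost), so the route is the right one and consistent with the paper's sources rather than genuinely different.

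However, as a standalone proof your proposal has a genuine gap exactly at the step you yourself flag as ``the main obstacle.'' The local lower bound along the boundary --- that for a tangent velocity $v=(v_x,0)$ the walk can track $v$ along $\Z^{d-1}\times\{0\}$ at cost arbitrarily close to $(\log\max\{\varphi,\varphi_0\})^*(v)$ --- is asserted via ``time-scale separation / law-of-large-numbers'' and ``interleaving,'' but not carried out; this is precisely the technical content of \cite{Ignatiouk:02} and \cite{D-E}, not a routine afterthought. Moreover, the identification of the stated local rate with the cost actually achievable by admissible paths is itself a lemma requiring proof: the convexification $\cl\,\conv\min\{(\log\varphi)^*,(\log\varphi_0)^*\}$ must be realized under the constraints that boundary steps use $\mu_0$ (whose vertical increments are nonnegative by (H0)), interior stretches use $\mu$ (vertical increments $\geq -1$), the net vertical drift is zero, and the vertical coordinate never leaves $\N$; showing that the constrained infimum coincides with the unconstrained conjugate $(\log\max\{\varphi,\varphi_0\})^*$ (which in the paper's framework is the integral-representation result of \cite{Ignatiouk:04}) is exactly the point you dispose of with ``restricted appropriately.'' The upper bound half is also lighter than you suggest only because one can use the global exponential supermartingale bound $P e^{a\cdot z}\leq\max\{\varphi(a),\varphi_0(a)\}e^{a\cdot z}$; your cylinder-set discretization near the boundary would otherwise need the Dupuis--Ellis--Weiss discontinuous-statistics argument in full. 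In short: right architecture, but the two key boundary estimates are named rather than proved, and they are where the actual difficulty of the proposition lives.
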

\noindent

This proposition is a consequence of the results obtained in
\cite{D-E-W,D-E,Ignatiouk:02,Ignatiouk:04}. The results of Dupuis, Ellis and
Weiss~\cite{D-E-W} prove that $I_{[0,T]}$ is a good rate function on $D([0,T],\R^d)$
and provide the SPLD upper bound. SPLD lower
bound follows from the local estimates 
obtained in \cite{Ignatiouk:02}, the general SPLD lower bound of Dupuis and
Ellis~\cite{D-E} and the integral representation of the corresponding rate function
obtained in \cite{Ignatiouk:04}. 

We are ready now to complete the proof of Proposition~\ref{pr1}. The proof of the upper bound
\be\label{e4-3}
\rho(P) ~\leq~ \inf_{a\in\R^d}\max\{\varphi(a), \varphi_0(a)\}
\ee
is quite simple. Recall that $\rho(P)$ is equal to the infimum of 
all those $t>0$ for which the inequality $P f \leq t f$ has a non-zero solution
$f>0$, see Seneta~\cite{Seneta}. Since for any $a\in\R^d$, this inequality is satisfied
with $t=\max\{\varphi(a),\varphi_0(a)\}$ for an exponential function
$f(z) ~=~ \exp(a\cdot z)$, one gets therefore $\rho(P) ~\leq~ \max\{\varphi(a), \varphi_0(a)\}$ for all $a\in\R^d$, and
consequently, \eqref{e4-3} holds. To prove   the lower bound 
\be\label{e4-4}
\rho(P) ~\geq~ \inf_{a\in\R^d}\max\{\varphi(a), \varphi_0(a)\}
\ee
we use  the results of the paper ~\cite{Ignatiouk:05}. Theorem~1 of
~\cite{Ignatiouk:05} proves that for a zero constant function $\ol{0}(t) = 0$,
$t\in[0,T]$, 
\[
\log\rho(P) ~=~ - \frac{1}{T} I_{[0,T]}(\ol{0}) 
\]
whenever the following conditions
are satisfied : 
\begin{enumerate} 
\item[$(a_1)$] for every $T>0$, the family of
rescaled processes $(Z_\eps(t), \; t\in[0,T])$ satisfies sample
path large deviation principle in $D([0,T], \R^{d-1}\times[0,\infty[)$ with a good rate functions
$I_{[0,T]}$;   
\item[$(a_2)$] the rate function $I_{[0,T]}$ has an integral form~: there is a local rate function
$L:(\R^{d-1}\times[0,\infty[)\times\R^d\to\R_+$ such that 
\[
I_{[0,T]}(\phi) = \int_0^T L(\phi(t),\dot\phi(t))  \,dt 
\]
if the function $\phi : [0,1]\to \R^{d-1}\times[0,\infty[$ is absolutely continuous, and $I_{[0,1]}(\phi)
= +\infty$ otherwise.
\item[$(a_3)$] there are two convex functions $l_1$ and $l_2$
  on $\R^d$   such that  
\begin{itemize}
\item[--]   $0\leq l_1(v) \leq L(x,v) \leq l_2(v)$ for all $x\in\R^{d-1}\times[0,\infty[$ and $v\in\R^d$, 
\item[--] the function $l_2$ is finite in a neighborhood of zero 
\item[--] and $$
\lim_{n\to\infty} \inf_{|v|\geq n} l_1(v)/|v| > 0.$$
\end{itemize}
\end{enumerate}
In our setting, the  conditions $(a_1)$ and $(a_2)$ are satisfied by
Proposition~\ref{pr4-1} and the condition $(a_3)$ is satisfied with 
$l_1(v) ~=~ (\log(\varphi,\varphi_0))^*(v)$ and $l_2(v) ~=~ (\log\varphi)^*(v)$~:
 
\begin{itemize}
\item[--]  Clearly, $
(\log(\varphi,\varphi_0))^*(v) ~\leq~ L(x,v) ~\leq~ (\log\varphi)^*(v)$
for all $x\in\R^{d-1}\times[0,\infty[$ and $v\in\R^d$. 
\item[--] Under the hypotheses (H2) and (H3), there is $\delta > 0$ such that 
\[
\liminf_{|a|\to\infty} \frac{1}{|a|}\log\varphi(a) ~>~ \delta,
\]
and consequently, 
\begin{align*}
\sup_{v\in\R^d : |v| \leq \delta} (\log\varphi)^*(v) &~=~ \sup_{v\in\R^d : |v| \leq \delta}
~\sup_{a\in\R^d} \Bigl(a\cdot v - \log\varphi(a)\Bigr)\\
&~=~ \sup_{a\in\R^d} ~\sup_{v\in\R^d : |v| \leq \delta}\Bigl(a\cdot v - \log\varphi(a)\Bigr)\\
&~=~  \sup_{a\in\R^d} ~\Bigl(\delta |a| - \log\varphi(a)\Bigr) ~<~ +\infty. 
\end{align*} 
The function $(\log\varphi)^*(v)$ is therefore finite in a neighborhood of zero. 
\item[--] For any $r >0$, 
\begin{align*}
(\log(\varphi,\varphi_0))^*(v) &~\geq~ \sup_{a\in\R^d: ~|a|\leq r} \Bigl(a\cdot v -
  \log(\varphi,\varphi_0)(a) \Bigr)\\
&~\geq~ \sup_{a\in\R^d: ~|a|\leq r} a\cdot v ~-
  \sup_{a\in\R^d: ~|a|\leq r} \log(\varphi,\varphi_0)(a) \\
&~\geq~ r |v| ~-
  \sup_{a\in\R^d: ~|a|\leq r} \log(\varphi,\varphi_0)(a).
\end{align*}
Since by (H3), the function $\log(\varphi,\varphi_0)$ is finite
  everywhere on $\R^d$, from this it follows that 
\[
\lim_{n\to\infty} ~\inf_{|v|\geq n} ~\frac{1}{|v|}(\log(\varphi,\varphi_0))^*(v) ~\geq~ r ~>~
0. 
\]
\end{itemize}
Using Theorem~1 of~\cite{Ignatiouk:05} and the explicit form of the local rate
function $L$ one gets 
\begin{align*}
\log\rho(P) ~=~ - \frac{1}{T} I_{[0,T]}(\ol{0}) &~=~ - L(0,0) ~= - (\log~\max\{\varphi, \varphi_0\})^*(0) \\&~=~ \log
\inf_{a\in\R^d}\max\{\varphi(a), \varphi_0(a)\}.
\end{align*}
Proposition~\ref{pr1} is therefore proved.

\section{Proof of Theorem~\ref{th1}}\label{proof-of-theorem}
In a particular case, for $t=1$, this theorem was proved in ~\cite{Ignatiouk:07} under
slight different conditions : in addition to the hypotheses (H0)-(H4), the positive
measures $\mu$ and $\mu_0$ were assumed to be probability measures and 
the means 
\[
m ~\dot=~ \sum_{z\in\Z^d} \mu(z) \, z \quad \text{ and } \quad m_0~=~ \sum_{z\in\Z^d}
\mu_0(z)\, z
\]
were assumed to satisfy the following condition :
\be\label{e5-1}
m/|m| + m_0/|m_0| ~\not=~ 0. 
\ee
Remark that under the above assumptions, the set $\partial D^1\cap \partial D_0^1$
contains the point zero and the set $D^1\cap D^1_0$ has a non-empty interior. By
Proposition~\ref{pr1} from this it follows that 
\be\label{e5-2}
\rho(P) ~=~ \inf_{a\in\R^d} \max\{\varphi(a),\varphi_0(a)\} ~<~ 1.
\ee
The above additional conditions can be replaced by a weaker one : for $t=1$, with the same
arguments as in ~\cite{Ignatiouk:07} one can get Theorem~\ref{th1}  when $\mu$ is a
probability measure on $\Z^d$ and $\mu_0$ is a positive measure on $\Z^d$ satisfying the
inequality \eqref{e5-2} such that $\mu_0(\Z^d)\leq 1$.  
This result is now combined with  the exponential change of the
measure in order to prove Theorem~\ref{th1} for 
\[
t ~>~ \rho(P) ~=~ \inf_{a\in\R^d} \max\{\varphi(a),\varphi_0(a)\}. 
\]
For any $t$ satisfying this inequality, there is a point $\tilde{a}_t\in\partial D^t\cap
D_0^t$. We consider a twisted random walk
$(\tilde{Z}(t))$ on $\Z^{d-1}\times\N$ with transition probabilities 
\[
\tilde{p}(z,z') ~=~ \begin{cases} \mu(z'-z) \exp(\tilde{a}_t\cdot(z'-z))/t &\text{ if
    $z=(x,y)\in\Z^{d-1}\times\N$ with $y>0$,}\\
\mu_0(z'-z) \exp(\tilde{a}_t\cdot(z'-z))/t &\text{ if
    $z=(x,y)\in\Z^{d-1}\times\N$ with $y=0$.}
\end{cases}
\]
For such a random walk $(\tilde{Z}(n))$, the jump
generating functions are given by 
\[
\tilde\varphi(a) ~=~ \sum_{z\in\Z^d} \tilde\mu(z) \exp(a\cdot z) ~=~
\varphi(a+\tilde{a}_t)/t,
\] 
and 
\[ 
\quad \tilde\varphi_0(a) ~=~ \sum_{z\in\Z^d} \tilde\mu_0(z) \exp(a\cdot z) ~=~ \varphi_0(a+\tilde{a}_t)/t.
\] 
Hence, 
\[
\tilde{D}^1 ~\dot=~ \{ a\in\R^d : ~\tilde\varphi(a) \leq 1\} ~=~ \{a \in\R^d : \varphi(a +
\tilde{a}_t) \leq t \} ~=~ -
\tilde{a}_t + D^t, 
\]
and similarly,
\[
\tilde{D}_0^1 ~\dot=~ \{ a\in\R^d : ~\tilde\varphi_0(a) \leq 1\} ~=~ -\tilde{a}_t + D^t_0.
\]
Moreover, with the same arguments one gets 
\[
\tilde{\Theta}^1 ~\dot=~ \bigl\{\a\in\R^{d-1} : \inf_{\beta\in\R}\max\{ \tilde\varphi(a),
\tilde\varphi_0(a) \} \leq 1\bigr\} ~=~ - \tilde\a_t + \Theta^t 
\]
where $\a_t$ denotes the vector of $d-1$ first coordinates of $\tilde{a}_t$,  
\[
\hat{\tilde{D}}^1 ~\dot=~ (\tilde{\Theta}^1\times\R)\cap \tilde{D}^1 ~=~ -\tilde{a}_t +
\hat{D}^t \quad \text{ and } \quad \tilde\Gamma_+^1 ~\dot=~
\hat{\tilde{D}}^1\cap\partial_+\tilde{D}^1 ~=~ -\tilde{a}_t + \Gamma^t_+.
\]
For any $a\in \Gamma_+^t$ the normal cone $V_t(a)$ to
the set $\hat{D}^t$ at the point $a$ is therefore identical to the normal cone 
$\tilde{V}_1(a-\tilde{a}_t)$ to the set $\hat{\tilde{D}}^1$  at the point $a-\tilde{a}_t\in\tilde\Gamma_+^1$. Remark finally 
that for any $a\in\tilde\Gamma_+^1$ the functions $\tilde{h}_{a,1}$ defined by \eqref{e1-13} 
with  $t=1$ and the functions $\tilde\varphi$ and $\tilde\varphi_0$ instead of $\varphi$ and
$\varphi_0$, satisfy the equality 
\[
\tilde{h}_{a,1}(z)(a) ~=~ h_{a+\tilde{a},t}(z) \exp(-\tilde{a}_t\cdot z), \quad \forall
z\in\Z^{d-1}\times\N. 
\]
Since clearly,
\begin{align*}
\tilde{G}_1(z,z') &~\dot=~ \sum_{n=0}^\infty \P_z(\tilde{Z}(n) = z') ~=~ \sum_{n=0}^\infty t^{-n}
  \P_z(Z(n) = z') \exp(\tilde{a}\cdot (z'-z)) \\&~=~ G_t(z,z') \exp(\tilde{a}\cdot
  (z'-z)), \quad \forall z,z'\in\Z^{d-1}\times\N,
\end{align*}
we conclude therefore that 
\begin{itemize}
\item[(i)] for any unit vector $q\in\R^{d-1}\times[0,+\infty[$  there
    exists a unique point $\hat{a}_t(q)\in \Gamma_+^t$  such that $q\in
    V_t(\hat{a}_t(q))$, 
\item[(ii)] for any
  $a\in\hat{D}^t\cap\partial_+ D^t$ and any sequence of points
  $z_n\in\Z^{d-1}\times\N$, 
\begin{align*}
\lim_{n\to\infty} K_t(z,z_n) &=~ \lim_{n\to\infty} G_t(z,z_n)/G_t(z_0,z_n) \\&=~ \exp(\tilde{a}\cdot
  (z-z_0))  ~\lim_{n\to\infty}  \tilde{G}_1(z,z_n)/\tilde{G}_1(z_0,z_n)  \\&=~ \exp(\tilde{a}\cdot
  (z-z_0)) ~\tilde{h}_{a-\tilde{a}_t,t}(z)/\tilde{h}_{a-\tilde{a}_t,t}(z_0) 
\\&=~ h_{a,t}(z)/h_{a,t}(z_0),\quad \quad \quad \quad \forall \;
z\in\Z^{d-1}\times\N, 
\end{align*}
whenever $\lim_{n\to\infty} |z_n|=\infty$ and $\lim_{n\to\infty}
  \dist(V_t(a),z_n/|z_n|)=0$.
\end{itemize}
Theorem~\ref{th1} is therefore proved. 

\providecommand{\bysame}{\leavevmode\hbox to3em{\hrulefill}\thinspace}
\providecommand{\MR}{\relax\ifhmode\unskip\space\fi MR }
\providecommand{\MRhref}[2]{%
  \href{http://www.ams.org/mathscinet-getitem?mr=#1}{#2}
}
\providecommand{\href}[2]{#2}

\end{document}